\setlist[itemize]{leftmargin=*}
\setlist[itemize,2]{label=$\circ$, leftmargin=*}
\patchcmd{\@maketitle}{\LARGE}{\LARGE\bf}{}{}
\def\ulfootnote{\gdef\@thefnmark{}\@footnotetext}
\numberwithin{equation}{section}
\newtheorem{prop}{Proposition}[section]
\newtheorem{thm}{Theorem}[section]
\renewenvironment{proof}[1][\proofname]{\par
\pushQED{\qed}%
\normalfont \topsep6\p@\@plus6\p@\relax
\trivlist
\item\relax
{\bfseries
#1\@addpunct{.}}\hspace\labelsep\ignorespaces
}{%
\popQED\endtrivlist\@endpefalse
}
\newcommand{\addressumu}{Department of Mathematics and Mathematical Statistics, Ume{\aa}~University, SE-901\,87 Ume{\aa}, Sweden}
\newcommand{\addressju}{Department of Mechanical Engineering, J\"onk\"oping University, 
SE-551\,11 J\"onk\"oping, Sweden}
\newcommand{\addressucl}{Department of Mathematics, University 
 College London, Gower Street, London WC1E~6BT, UK}
\newcommand{\bfn}{\boldsymbol n}
\newcommand{\bfp}{\boldsymbol p}
\newcommand{\bfx}{\boldsymbol x}
\newcommand{\bfy}{\boldsymbol y}
\newcommand{\mcK}{\mathcal{K}}
\newcommand{\tn}{|\mspace{-1mu}|\mspace{-1mu}|}
\newcommand{\IR}{\mathbb{R}}
\newcommand{\mcKh}{\mcK_h}
\numberwithin{equation}{section}
\title{ Dirichlet Boundary 
Value Correction using Lagrange Multipliers}
\date{\today}
\author[1]{Erik Burman}
\author[2]{Peter Hansbo}
\author[3]{Mats G. Larson}
\affil[1]{{\footnotesize\it \addressucl}}
\affil[ ]{{\footnotesize\texttt{e.burman@ucl.ac.uk}}}
\affil[2]{{\footnotesize\it \addressju}}
\affil[ ]{{\footnotesize\texttt{peter.hansbo@ju.se}}}
\affil[2]{{\footnotesize\it \addressumu}}
\affil[ ]{{\footnotesize\texttt{mats.larson@umu.se}}}
\date{\today}
\begin{document}

\maketitle

\begin{abstract}
We propose a boundary value correction approach for cases when curved boundaries are approximated by straight lines (planes) and Lagrange multipliers are used to enforce Dirichlet 
boundary conditions. The approach allows for optimal order convergence for polynomial order up to 3. We show the relation to the Taylor series expansion approach used by Bramble, Dupont 
and Tom\'ee \cite{BrDuTh72} in the context of Nitsche's method and, in the case of  
\emph{inf--sup} stable multiplier methods, prove a priori error estimates with explicit 
dependence on the meshsize and distance between the exact and approximate boundary.\end{abstract}

\section{Introduction}
 
In this contribution we develop a modified Lagrange multiplier method based on the 
idea of boundary value correction originally proposed for standard
finite element methods on an approximate domain in \cite{BrDuTh72} and
further developed in \cite{Du74}. More recently boundary value correction have been 
developed for cut and immersed finite element methods \cite{BuHaLa18,BuHaLa18b,BBCL18, MaSc18,MaSc18b}. Using the closest point mapping to the exact boundary, or an approximation thereof, the boundary condition on the exact boundary may be weakly enforced using multipliers 
on the boundary of the approximate domain. Of particular practical importance 
in this context is the fact that we may use a piecewise linear approximation of the boundary, which is very convenient from a computational point of view since the geometric computations are simple in this case and a piecewise linear distance function may be used to construct the discrete domain. 

We prove optimal order a priori error estimates, in the energy and $L^2$ norms, 
in terms of the error in the boundary approximation and the meshsize. The proof 
utilizes the a priori error estimates derived in \cite{BuHaLa18} for the cut boundary value corrected Nitsche method together with a bound, which shows that the solution to the boundary value corrected Lagrange method is close to the corresponding Nitsche solution for which optimal bounds are available. We 
obtain optimal order convergence for polynomial approximation up to order 3 of the solution. 

Note that without boundary correction one typically requires $O(h^{p+1})$ accuracy in the $L^\infty$ norm for the approximation of the domain, which 
leads to significantly more involved computations on the cut elements for 
higher order elements, see \cite{JoLa13}. We present numerical results 
illustrating our theoretical findings. 

The outline of the paper is as follows: In Section 2 
we formulate the model problem and our method, in Section 3 we present 
our theoretical analysis, in Section 4 we discuss the choice of finite 
element spaces in cut finite element methods, in Section 5 we 
present the numerical results, and finally in Section 6 we include some 
concluding remarks.

\section{Model problem and method}

\subsection{The domain}

Let $\Omega$ be a domain in $\mathbb{R}^d$ with smooth boundary 
$\partial \Omega$ and exterior unit normal $\bfn$. We let $\varrho$ 
be the signed distance function, negative on the inside and 
positive on the outside, to $\partial \Omega$ and we let 
$U_\delta(\partial \Omega)$ 
be the tubular neighborhood $\{\bfx\in \IR^d : |\varrho(\bfx)| < \delta\}$ 
of $\partial \Omega$. Then there is a constant $\delta_0>0$ such 
that the closest point mapping $\bfp(\bfx):U_{\delta_0}(\partial \Omega) 
\rightarrow \partial \Omega$ is well defined and we have the 
identity $\bfp(\bfx) = \bfx - \varrho(\bfx)\bfn(\bfp(\bfx))$. We assume that 
$\delta_0$ is chosen small enough that $\bfp(\bfx)$ is well defined. 
See \cite{GilTru01}, Section 14.6 for further details on distance functions.

\subsection{The model problem}
We consider the problem: find $u:\Omega \rightarrow \IR$ 
such that
\begin{alignat}{2}\label{eq:poissoninterior_strong}
-\Delta u &= f \qquad 
&& \text{in $\Omega$}
\\ \label{eq:poissonbc_strong}
u &= g \qquad && \text{on $\partial\Omega$}
\end{alignat}
where $f\in H^{-1}(\Omega)$ and $g\in H^{1/2}(\partial \Omega)$ 
are given data. It follows from the Lax-Milgram Lemma that there 
exists a unique solution to this problem and we also have the 
elliptic regularity estimate
\begin{equation}\label{eq:ellipticregularity}
\|u\|_{H^{s+2}(\Omega)} \lesssim \|f\|_{H^s(\Omega)}, \qquad 
s \geq -1.
\end{equation} 
Here and below we use the notation $\lesssim$ to denote less or 
equal up to a constant.

Using a Lagrange multiplier to enforce the boundary condition we can write the weak form of \eqref{eq:poissoninterior}--\eqref{eq:poissonbc} as: find
$(u,\lambda) \in H^1(\Omega) \times H^{-1/2}(\partial\Omega)$ such that
\begin{alignat}{2}\label{eq:poissoninterior}
\int_{\Omega}\nabla u \cdot\nabla v \,\text{d}\Omega +\int_{\partial\Omega}\lambda\, v\, \text{d}s  &= \int_{\Omega}f v\,  \text{d}\Omega\qquad \forall v\in H^1(\Omega)
\\ \label{eq:poissonbc}
\int_{\partial\Omega}u\, \mu\, \text{d}s  &= \int_{\partial\Omega}g\, \mu\, \text{d}s\qquad \forall \mu\in H^{-1/2}(\partial\Omega)
\end{alignat}

\subsection{The mesh and the discrete domain}

Let $\mcK_{h}, h \in (0,h_0]$, 
be a family of quasiuniform partitions, with 
mesh parameter $h$, of $\Omega$ into shape 
regular triangles or tetrahedra $K$. 
The partitions induce discrete polygonal approximations $\Omega_h = \cup_{K \in \mcK_h}K$, 
$h \in (0,h_0]$, of $\Omega$. We assume neither $\Omega_h \subset \Omega$ 
nor  $\Omega \subset
\Omega_h$, instead the accuracy with which $\Omega_h$ approximates
$\Omega$ will be crucial. To each $
\Omega_h$ is associated a discrete unit normal $\bfn_h$ and a discrete signed distance 
$\varrho_h:\partial \Omega_h \rightarrow \mathbb{R}$, such that if $\bfp_h(\bfx,\varsigma):=\bfx + \varsigma \bfn_h(\bfx)$ then
$\bfp_h(\bfx,\varrho_h(\bfx)) \in \partial \Omega$ for all $\bfx \in \partial \Omega_h$. We will also assume that $\bfp_h(\bfx,\varsigma)
\in U_{\delta_0}(\Omega):=U_{\delta_0}(\partial\Omega)\cup\Omega$ for all $\bfx \in \partial \Omega_h$ and all
$\varsigma$ between $0$ and $\varrho_h(\bfx)$. For conciseness we will 
drop the second argument of $\bfp_h$ below whenever it takes the value $\varrho_h(\bfx)$, and thus we have the map $\partial \Omega_h \ni \bfx 
\mapsto \bfp_h(\bfx) \in \partial \Omega$. We assume that the following assumptions are satisfied
\begin{equation}\label{eq:geomassum-a}
\delta_h := \| \varrho_h \|_{L^\infty(\partial \Omega_h)} = o(h), 
\qquad  h \in (0,h_0]
\end{equation}
and 
\begin{equation}\label{eq:geomassum-c}
\| \bfn_h - \bfn\circ \bfp_h \|_{L^\infty(\partial \Omega_h)} = o(1), 
\qquad  h \in (0,h_0]
\end{equation}
where $o(\cdot)$ denotes the little ordo. We also assume 
that $h_0$ is small enough to guarantee that 
\begin{equation}\label{eq:geomassum-b}
\partial \Omega_h \subset U_{\delta_0}(\partial \Omega), \qquad h\in(0,h_0]
\end{equation}
and that  there exists $M>0$ such for any $\bfy \in U_{\delta_0}(\partial
\Omega)$ the equation, find $\bfx \in \partial \Omega_h$ and $
|\varsigma| \leq \delta_h$ such that
\begin{equation}\label{eq:assump_olap}
{\bfp}_h(\bfx,\varsigma) = \bfy 
\end{equation}
has a solution set $\mathcal{P}_h$ with
\begin{equation}\label{eq:card_hyp}
\mbox{card}(\mathcal{P}_h) \leq M
\end{equation}
uniformly in $h$. The rationale of this assumption is to ensure that
the image of $\bfp_h$ can not degenerate for
vanishing $h$; for more information cf. \cite{BuHaLa18}.

 We note that it follows from (\ref{eq:geomassum-a}) that 
\begin{equation}\label{eq:geomassum-exact-normal}
\|\varrho \|_{L^\infty(\partial \Omega_h)} 
\lesssim 
\|\varrho_h \|_{L^\infty(\partial \Omega_h)} 
= o(h)
\end{equation}
since $|\varrho_h(\bfx)| \geq |\varrho(\bfx)|$, $\bfx \in U_{\delta_0}(\partial \Omega)$.
We also assume the additional regularity
\begin{equation}\label{eq:residualregularity}
f+ \Delta u \in H^{l\textcolor{black}{+\frac12+\epsilon}}(U_{\delta_0}(\Omega))
\end{equation}

\subsection{The finite element method} 
\subsubsection{Boundary value correction} 

The basic idea of the boundary value correction of \cite{BrDuTh72} is to use a Taylor series at $\bfx \in {\partial\Omega_h}$ in the direction $\bfn_h$, and let 
this series represent $u_h \vert_{\partial\Omega}$. In the present work we will restrict ourselves to 
\begin{equation}\label{def:Taylor}
u_h\circ \bfp_h(\bfx) \approx u_h(\bfx) + \varrho_h(\bfx)\bfn_h(\bfx)\cdot\nabla u_h(\bfx)
\end{equation}
which is the case of most practical interest.

Choosing appropriate discrete spaces $V_h$ and $\Lambda_h$ for the approximation of $u$ and $\lambda$, respectively (particular choices are considered in Section \ref{sec:numex}),
we thus seek $(u_h,\lambda_h)\in V_h\times\Lambda_h$ such that
\begin{alignat}{2}\label{eq:poissoninterior_FEM}
\int_{\Omega_h}\nabla u_h \cdot\nabla v \,\text{d}\Omega_h +\int_{\partial\Omega_h}\lambda_h\, v\, \text{d}s  &= \int_{\Omega_h}f v\,  \text{d}\Omega_h\qquad \forall v\in V_h
\\ \label{eq:poissonbc_FEM}
\int_{\partial\Omega_h}(u_h+\varrho_h\bfn_h\cdot\nabla u_h)\, \mu\, \text{d}s  &= \int_{\partial\Omega_h}\tilde{g}\, \mu\, \text{d}s\qquad \forall \mu\in \Lambda_h
\end{alignat}
where we introduced the notation $\tilde{g}:= g\circ \bfp_h$ for the pullback of
 $g$ from $\partial \Omega$ to $\partial \Omega_h$.

Using Green's formula we note that the first equation implies that $\lambda_h = -\bfn_h\cdot\nabla u_h$, and therefore we now propose the following modified method: Find $(u_h,\lambda_h)\in V_h\times\Lambda_h$ such that
\begin{alignat}{2}\label{eq:multinterior}
\int_{\Omega_h}\nabla u_h \cdot\nabla v \,\text{d}\Omega_h +\int_{\partial\Omega_h}\lambda_h\, v\, \text{d}s  &= \int_{\Omega_h}f v\,  \text{d}\Omega_h\qquad \forall v\in V_h
\\ \label{eq:multbc}
\int_{\partial\Omega_h}u_h\, \mu\, \text{d}s-\int_{\partial\Omega_h} \varrho_h\lambda_h\,\mu\, \text{d}s  &= \int_{\partial\Omega}\tilde{g}\, \mu\, \text{d}s\qquad \forall \mu\in \Lambda_h
\end{alignat}
or
\begin{equation}
A(u_h,\lambda_h;v,\mu) = (f,v)_{\Omega_h} + (\tilde{g},\mu)_{\partial\Omega_h}\quad \forall (u_h,\lambda_h)\in V_h\times\Lambda_h\label{eq:mainproblem}
\end{equation}
where $(\cdot,\cdot)_{M}$ denotes the $L_2$ scalar product over $M$, with $\| \cdot\|_{M}$ the corresponding $L_2$ norm,
and
\[
A(u,\lambda;v,\mu) := (\nabla u ,\nabla v )_{\Omega_h} +(\lambda , v)_{\partial\Omega_h} +( u,\mu)_{\partial\Omega_h} -(\varrho_h\lambda ,\mu)_{\partial\Omega_h}.
\]

%

\subsection{Relation to Nitsche's method with boundary value correction}
Problem (\ref{eq:mainproblem}) can equivalently be formulated as finding the stationary points of the Lagrangian
\begin{equation}
\mathcal{L}(u,\lambda) := \frac12\|\nabla u\|^2_{\Omega_h} +  (\lambda,u)_{\partial\Omega_h}-\|\varrho^{1/2}_h\lambda\|^2_{\partial\Omega_h}
-(f,u)_{\Omega_h} - (\tilde{g},\lambda)_{\partial\Omega_h}
\end{equation}
We now follow \cite{BuHa17} and add a consistent penalty term and seek stationary points of the augmented Lagrangian
\begin{equation}
\mathcal{L}_\text{aug}(u,\lambda) := 
\mathcal{L}(u,\lambda) + \frac{1}{2} \|\gamma^{1/2}(u-\varrho_h\lambda-\tilde{g})\|^2_{\partial\Omega_h}
\end{equation}
where $\gamma > 0$ remains to be chosen. The corresponding optimality system is 
\begin{align}\nonumber
(f,v)_{\Omega_h} + (\tilde{g},\mu)_{\partial\Omega_h} = {}& A(u_h,\lambda_h;v,\mu) 
 +(\gamma (u_h-\varrho_h\lambda_h-\tilde{g}),v)_{\partial\Omega_h}\\
& -(\gamma\varrho_h (u_h-\varrho_h\lambda_h-\tilde{g}),\mu)_{\partial\Omega_h} 
\end{align}
Now, formally replacing $\lambda_h$ by $-\bfn_h\cdot\nabla u_h$ and $\mu$ by $-\bfn_h\cdot\nabla v$ we obtain
\begin{align}\nonumber
(f,v)_{\Omega_h} - (\tilde{g},\bfn_h\cdot\nabla v)_{\partial\Omega_h} = {}& (\nabla u_h ,\nabla v )_{\Omega_h} -(\bfn_h\cdot\nabla u_h,v)_{\partial\Omega_h}  \\ \nonumber
 & -(u_h,\bfn_h\cdot\nabla v)_{\partial\Omega_h}
-(\varrho_h \bfn_h\cdot\nabla u_h,\bfn_h\cdot\nabla v)_{\partial\Omega_h}\\
&  +(\gamma (u_h+\varrho_h\bfn_h\cdot\nabla u_h-\tilde{g}),v+\varrho_h\bfn_h\cdot\nabla v)_{\partial\Omega_h}
\end{align}
Setting now $\gamma = \gamma_0/h$, with $\gamma_0$ sufficiently large
to ensure coercivity, we obtain the symmetrized version of the
boundary value corrected Nitsche method proposed in \cite{BrDuTh72}
with optimal convergence up to order $p=3$ assuming $\varrho_h\ge - C
h$, for some sufficiently small constant. This means that
$\partial \Omega_h$ either has to be a good approximation of $\partial
\Omega$, or where
it approximates poorly, $\Omega_h$ must approximation $\Omega$ from
the inside. For future reference we
define this method as: Find $u_h \in V_h$ such that
\begin{equation}\label{eq:Nitform}
A_{Nit}(u_h,v_h) = (f,v_h)_{\partial \Omega_h} + (\tilde g, \bfn_h
\cdot \nabla v_h)_{\partial \Omega_h}+ (\gamma \tilde g, v_h + \varrho_h \bfn_h
\cdot \nabla v_h)_{\partial \Omega_h}
\end{equation}
for all $v_h \in V_h$.
Here the bilinear form is defined by
\begin{align}\nonumber
A_{Nit}(w_h,v_h) 
&:= (\nabla w_h ,\nabla v_h )_{\Omega_h}
-(\bfn_h\cdot\nabla w_h,v_h+\varrho_h \bfn_h\cdot\nabla v_h)_{\partial\Omega_h}  -(w_h+\varrho_h \bfn_h\cdot\nabla
w_h,\bfn_h\cdot\nabla
v)_{\partial\Omega_h} 
\\
&\qquad +(\varrho_h \bfn_h\cdot\nabla
w_h,\bfn_h\cdot\nabla v)_{\partial\Omega_h}
+ (\gamma (w_h+\varrho_h\bfn_h\cdot\nabla w_h,v+\varrho_h\bfn_h\cdot\nabla v_h)_{\partial\Omega_h}.
\end{align}

\section{Elements of analysis}
In this section we will prove some basic results on the stability and
the accuracy of the method (\ref{eq:mainproblem}). We will restrict
ourselves to a discussion of the case $- C h \leq \varrho_h$, for some
$C$ small enough. We assume that $\Lambda_h$ is the space of piecewise
polynomial functions of order $k-1$ and $V_h$ is the space of
continuous piecewise polynomial functions of order $k$, that we will
denote $V_h^k$, enriched with
higher order bubbles on the faces in $\partial \Omega_h$ so that
inf-sup stability holds. The precise condition is given in equation
(\ref{eq:infsup}) below. For details on stable choices of the
multiplier space we refer to \cite{BM97, BD98, KLPV01}. We introduce the triple norm defined on $
H^1(\Omega_h) \times L^2(\partial \Omega_h)$:
\begin{equation}
\tn (v,\mu) \tn := \|\nabla v\|_{\Omega_h} + \|h^{-\frac12}
v\|_{\partial \Omega_h} + \|h^{\frac12} \mu\|_{\partial \Omega_h}.
\end{equation}
We let $\pi_h:L^2(\partial \Omega_h) \to \Lambda_h$ denote the
$L^2$-orthogonal projection and we assume that the bound 
\begin{equation}
\|v - \pi_h v\|_{\partial \Omega_h} \lesssim h \|\nabla_\partial v\|_{\partial \Omega_h}
\end{equation}
for all $v \in H^1(\partial \Omega_h)$ and where $\nabla_\partial$
denotes the gradient on the boundary. 
The formulation (\ref{eq:mainproblem}) satisfies the following
stability result
\begin{prop}\label{prop:infsup}
Assume that $\varrho_h \ge - C_{\partial \Omega} h$ and that $V_h
\times \Lambda_h$ satisfies the inf-sup condition. Then for $C_{\partial
  \Omega}$ sufficiently small, for all $(y_h,\eta_h) \in V_h \times \Lambda_h$, 
  there exists
$(v_h,\mu_h) \in V_h \times \Lambda_h$ such that
\begin{equation}
\tn (y_h,\eta_h) \tn \lesssim A(y_h,\eta_h;v_h,\mu_h)
\end{equation}
and 
\begin{equation}
\tn (v_h,\mu_h) \tn \lesssim \tn  (y_h,\eta_h)\tn.
\end{equation}
\end{prop}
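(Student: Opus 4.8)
The plan is to prove the inf--sup stability for the full bilinear form $A$ by the standard two-contribution argument: first extract control of $\|\nabla y_h\|_{\Omega_h}$ together with the boundary pieces by a careful choice of test function, and then absorb the multiplier term using the assumed inf--sup condition on $V_h \times \Lambda_h$, being careful that the new perturbation term $(\varrho_h \lambda, \mu)_{\partial\Omega_h}$ is harmless because $|\varrho_h| = o(h) \leq C_{\partial\Omega} h$.

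First I would test with $(v_h,\mu_h) = (y_h, -\eta_h)$, which gives
\[
A(y_h,\eta_h;y_h,-\eta_h) = \|\nabla y_h\|^2_{\Omega_h} + (\varrho_h \eta_h,\eta_h)_{\partial\Omega_h} \geq \|\nabla y_h\|^2_{\Omega_h} - C_{\partial\Omega} h \|\eta_h\|^2_{\partial\Omega_h},
\]
so this controls the gradient but loses a (small, negative) multiple of $\|h^{1/2}\eta_h\|^2_{\partial\Omega_h}$ and says nothing about $\|h^{-1/2} y_h\|_{\partial\Omega_h}$. To recover the boundary norm of $y_h$ I would use the inf--sup condition: there is $\mu_h^\star \in \Lambda_h$ with $\|h^{1/2}\mu_h^\star\|_{\partial\Omega_h} \lesssim \|h^{-1/2} y_h\|_{\partial\Omega_h}$ and $(y_h,\mu_h^\star)_{\partial\Omega_h} \gtrsim \|h^{-1/2}y_h\|^2_{\partial\Omega_h}$ (after passing through $\pi_h y_h$ and the approximation bound on $\pi_h$). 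Testing $A$ with $(0,\mu_h^\star)$ yields $(y_h,\mu_h^\star)_{\partial\Omega_h} - (\varrho_h \eta_h, \mu_h^\star)_{\partial\Omega_h}$; the last term is bounded by $C_{\partial\Omega} h \|\eta_h\|_{\partial\Omega_h}\|\mu_h^\star\|_{\partial\Omega_h} \lesssim C_{\partial\Omega}\|h^{1/2}\eta_h\|_{\partial\Omega_h}\|h^{-1/2}y_h\|_{\partial\Omega_h}$, hence absorbable by Young's inequality into the main term at the cost of a constant times $\|h^{1/2}\eta_h\|^2_{\partial\Omega_h}$. To get control of $\|h^{1/2}\eta_h\|_{\partial\Omega_h}$ itself I would test with $(v_h^\star,0)$ where $v_h^\star \in V_h$ realizes the inf--sup pairing with $\eta_h$ in the other direction, i.e. $(\eta_h, v_h^\star)_{\partial\Omega_h} \gtrsim \|h^{1/2}\eta_h\|^2_{\partial\Omega_h}$ with $\|\nabla v_h^\star\|_{\Omega_h} + \|h^{-1/2}v_h^\star\|_{\partial\Omega_h} \lesssim \|h^{1/2}\eta_h\|_{\partial\Omega_h}$; this produces $\|h^{1/2}\eta_h\|^2_{\partial\Omega_h}$ plus a $(\nabla y_h, \nabla v_h^\star)_{\Omega_h}$ term controlled by the already-secured gradient bound.

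I would then take $(v_h,\mu_h) := (y_h + \alpha v_h^\star,\ -\eta_h + \beta \mu_h^\star)$ for small positive parameters $\alpha, \beta$, collect the three inequalities, and choose $\beta$ small relative to $1$, then $\alpha$ small relative to $\beta$, and finally $C_{\partial\Omega}$ small relative to $\alpha$, so that all cross terms are absorbed and one is left with $\tn(y_h,\eta_h)\tn^2 \lesssim A(y_h,\eta_h;v_h,\mu_h)$. The bound $\tn(v_h,\mu_h)\tn \lesssim \tn(y_h,\eta_h)\tn$ then follows immediately from the triangle inequality and the norm bounds on $v_h^\star$ and $\mu_h^\star$ coming from the inf--sup condition and the $\pi_h$ estimate.

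The main obstacle is the bookkeeping of the smallness parameters: one must check that the chain of absorptions ($\beta \ll 1$, $\alpha \ll \beta$, $C_{\partial\Omega} \ll \alpha$) is consistent and that the perturbation term $(\varrho_h\lambda,\mu)_{\partial\Omega_h}$ — which couples the two components of the form in a nonstandard way — never re-enters at a size that cannot be dominated. Since $\delta_h = o(h)$, for $h_0$ small enough one even has $\varrho_h$-terms genuinely lower order, but to keep the statement uniform in $h$ it is cleanest to carry $C_{\partial\Omega}$ explicitly and verify that the required smallness of $C_{\partial\Omega}$ depends only on the inf--sup constant, the $\pi_h$-approximation constant, and shape-regularity, not on $h$.
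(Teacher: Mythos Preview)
Your proposal is correct and follows essentially the same route as the paper: test with $(y_h,-\eta_h)$, add a small multiple of the inf--sup function $v_h^\star$ (the paper's $v_\eta$) to recover $\|h^{1/2}\eta_h\|_{\partial\Omega_h}$, add a small multiple of $h^{-1}\pi_h y_h$ (the paper's $\mu_y$) to recover $\|h^{-1/2}y_h\|_{\partial\Omega_h}$, and absorb the $\varrho_h$-perturbation using $|\varrho_h|\le C_{\partial\Omega}h$ with $C_{\partial\Omega}$ small. The only minor imprecision is that $(y_h,\mu_h^\star)_{\partial\Omega_h}\gtrsim \|h^{-1/2}y_h\|^2_{\partial\Omega_h}$ should really read $\gtrsim \|h^{-1/2}y_h\|^2_{\partial\Omega_h}-C\|\nabla y_h\|^2_{\Omega_h}$ (the paper makes this explicit via the $\pi_h$ approximation and trace inequality), and your stated ordering $\alpha\ll\beta$ is not actually needed---both $\alpha$ and $\beta$ just have to be small relative to fixed constants, with $C_{\partial\Omega}$ small relative to the resulting $\eta_h$-coefficient.
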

\begin{proof}
First observe that
\begin{equation}
A(y_h,\eta_h;y_h,-\eta_h) = \|\nabla y_h\|^2_{\Omega_h} + (\varrho_h
\eta_h, \eta_h)_{\partial \Omega_h}.
\end{equation}
Then recall that since the space satisfies the inf-sup condition there
exists $v_\eta \in V_h$ such that
\begin{equation}\label{eq:infsup}
\|h^{\frac12} \eta_h\|_{\partial \Omega_h} \lesssim
(\eta_h,v_\eta)_{\partial \Omega_h} \quad \mbox{and} \quad  \|\nabla
v_\eta\|_{\Omega_h}+\|h^{-\frac12}
v_\eta\|_{\partial \Omega_h} \lesssim \|h^{\frac12} \eta_h\|_{\partial \Omega_h}.
\end{equation}
It follows that for some $c_\eta,\, C_{\partial \Omega_h}$
sufficiently small 
\begin{align}
\|\nabla y_h\|^2_{\Omega_h}+\|h^{\frac12} \eta_h\|^2_{\partial \Omega}
&\lesssim \|\nabla y_h\|^2_{\Omega_h} + (\varrho_h
\eta_h, \eta_h)_{\partial \Omega_h} + \|h^{\frac12}
\eta_h\|^2_{\partial \Omega} 
\\
&\lesssim A(y_h,\eta_h;y_h + c_\eta v_\eta,-\eta_h ).
\end{align}
Here we used equation \eqref{eq:infsup},
\begin{equation}
(\varrho_h
\eta_h, \eta_h)_{\partial \Omega_h} + \|h^{\frac12}
\eta_h\|^2_{\partial \Omega} \ge (1 - C_{\partial \Omega_h}) \|h^{\frac12}
\eta_h\|^2_{\partial \Omega}
\end{equation}
and 
\begin{equation}
(\nabla y_h, y_h + c_\eta v_\eta)_{\Omega_h} \ge -\frac12 \|\nabla
y_h\|^2_{\Omega_h} - 2 c_\eta^2 \|v_\eta\|^2_{\partial \Omega}.
\end{equation}
Finally let $\mu_y =\pi_h y_h$ and observe that
\begin{align}\nonumber
&(y_h, h^{-1}  \mu_y)_{\partial \Omega} - (\rho_h \eta_h, h^{-1} \mu_y)
_{\partial \Omega} 
\\
 &\qquad \ge \|h^{-\frac12}
y_h\|_{\partial \Omega_h}^2 - \|h^{-\frac12} (y_h - \mu_y)\|_{\partial
  \Omega_h}^2 -  \frac12  C_{\partial \Omega_h}^2 \|h^{\frac12} \eta_h\|_{\partial
  \Omega_h}^2 - \frac12 \|h^{-\frac12} \mu_y\|_{\partial
  \Omega_h}^2 
  \\
  &\qquad 
\ge \frac12 \|h^{-\frac12}
y_h\|_{\partial \Omega_h}^2 - C^2_t \|\nabla y_h\|^2_{\Omega_h} -  \frac12  C_{\partial \Omega}^2 \|h^{\frac12} \eta_h\|_{\partial
  \Omega_h}^2.
\end{align}
Where we used the approximation property of $\pi_h$ and a trace
inequality 
\begin{equation}\label{eq:trace}
h_K^{\frac12}\|v_h\|_{\partial K} + h_K \|\nabla v_h\|_{K} \lesssim \|v_h\|_K.
\end{equation}
for all elements  $K \in \mcKh$ and polynomials $v_h \in \mathbb{P}(K)$, 
to show that
\begin{equation}
\|h^{-\frac12} (y_h - \mu_y)\|_{\partial
  \Omega_h} \leq C_t \|\nabla y_h\|_{\Omega_h}.
\end{equation}
The first claim follows by taking $v_h = y_h + c_ \eta v_\eta$ and
$\mu_h = - \eta_h + c_y h^{-1} \mu_y$ with $c_\eta$ and $c_y$ both $O(1)$,
sufficiently small and assuming that $C_{\partial \Omega_h}$ is small
enough.

To conclude the proof we need to show that
\begin{equation}
\tn (v_h,\mu_h) \tn \lesssim \tn  (y_h,\eta_h) \tn.
\end{equation}
By the triangle inequality we have
\begin{equation}
\tn (v_h,\mu_h) \tn \leq \tn  (y_h,\eta_h) \tn+\tn  ( c_ \eta v_\eta,c_y h^{-1} \mu_y) \tn.
\end{equation}
By definition
\begin{equation}
\tn  ( c_ \eta v_\eta,c_y h^{-1} \mu_y) \tn =c_ \eta \|\nabla v_\eta\|_{\Omega_h} + c_ \eta\|h^{-\frac12}
v_\eta\|_{\partial \Omega_h} + c_y \|h^{-\frac12} \mu_y\|_{\partial \Omega_h}
\end{equation}
and the proof follows from (\ref{eq:infsup}) together with the stability of $\pi_h$ 
in $L^2$.
\end{proof}
We will now use this stability result to prove an error estimate. For
simplicity we here assume that $\varrho_h>0$, i.e. $\Omega_h \subset \Omega$.
\begin{thm}
Let $u \in H^{k+1}(\Omega)$ denote the solution to (\ref{eq:poissoninterior})--(\ref{eq:poissonbc}).
Let $u_h, \lambda_h \in V_h \times \Lambda_h$ denote the solution of
(\ref{eq:mainproblem}). Assume that the polynomial order of $V_h$ is
$k \in \{1,2,3\}$, with enrichment on the boundary and $\Lambda_h \equiv X_h^{k-1}$. Assume that
$V_h \times \Lambda_h$ satisfies (\ref{eq:infsup}).
Then there holds, with $\tilde \lambda = \bfn_h \cdot \nabla
u\vert_{\partial \Omega_h}$,
\begin{align}\label{eq:errorestenergy}
\tn (u - u_h, \tilde \lambda- \lambda_h) \tn
&\lesssim 
h^{k} \|u \|_{H^{k+1}(\Omega)}
+ 
h^{-1/2} \delta_h^{2} 
\sup_{0\leq t \leq \delta_0} \| D^{k+1} u\|_{L^2(\partial \Omega_t)}
\\ \nonumber
&\qquad + 
\textcolor{black}{h^{1/2}} \delta_h^{l+1} 
\sup_{-\delta_0\leq t < 0} 
\| D_n^{l} (f + \Delta u)\|_{L^2(\partial \Omega_t)}.
\end{align}
\end{thm}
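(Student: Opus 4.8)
The plan is to exploit the fact, established by the augmented-Lagrangian manipulation in the previous subsection, that the Lagrange-multiplier solution $(u_h,\lambda_h)$ is intimately related to the boundary-value-corrected Nitsche solution, for which optimal estimates are available in \cite{BuHaLa18}. Concretely, I would let $\hat u_h \in V_h$ denote the solution of the Nitsche method \eqref{eq:Nitform} with $\gamma = \gamma_0/h$, and split the error as
\begin{equation}
\tn (u - u_h, \tilde\lambda - \lambda_h) \tn \leq \tn (u - \hat u_h, \tilde\lambda + \bfn_h\cdot\nabla \hat u_h) \tn + \tn (\hat u_h - u_h, -\bfn_h\cdot\nabla\hat u_h - \lambda_h) \tn .
\end{equation}
The first term is controlled directly: the a priori bound from \cite{BuHaLa18} for the cut boundary-value-corrected Nitsche method gives exactly the right-hand side of \eqref{eq:errorestenergy}, after checking that the consistency/geometric error terms there (Taylor remainder in \eqref{def:Taylor}, the pull-back of $g$, and the extension of $f+\Delta u$ off $\partial\Omega_h$, which is why \eqref{eq:residualregularity} is assumed) produce precisely the $h^{-1/2}\delta_h^2 \sup\|D^{k+1}u\|$ and $h^{1/2}\delta_h^{l+1}\sup\|D_n^l(f+\Delta u)\|$ contributions. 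So the real work is the second term, the \emph{difference between the two discrete solutions}.

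For the second term I would use Proposition \ref{prop:infsup}: applied to $(y_h,\eta_h) = (\hat u_h - u_h, -\bfn_h\cdot\nabla\hat u_h - \lambda_h)$ it produces a test pair $(v_h,\mu_h)$ with $\tn(v_h,\mu_h)\tn \lesssim \tn(y_h,\eta_h)\tn$ and $\tn(y_h,\eta_h)\tn \lesssim A(y_h,\eta_h;v_h,\mu_h)$. Then I would expand $A(\hat u_h - u_h, -\bfn_h\cdot\nabla\hat u_h - \lambda_h; v_h,\mu_h)$ using the two defining problems: for $u_h$ use \eqref{eq:multinterior}--\eqref{eq:multbc}, and for $\hat u_h$ substitute $\lambda_h \leadsto -\bfn_h\cdot\nabla\hat u_h$ into those same equations and measure the defect. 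Because \eqref{eq:Nitform} was obtained from the augmented Lagrangian by exactly this substitution, the mismatch is the penalty term $\gamma = \gamma_0/h$ acting on $u_h + \varrho_h\bfn_h\cdot\nabla u_h - \tilde g$, i.e. on the boundary residual of the \emph{Lagrange} solution. One shows this residual is itself small — it equals (up to the projection $\pi_h$) the quantity $\varrho_h(\lambda_h + \bfn_h\cdot\nabla\hat u_h)$ plus a Taylor-type term — so that after absorbing factors of $h$ and $\delta_h$ the difference $\tn(\hat u_h - u_h,\cdot)\tn$ is bounded by the same right-hand side as the first term, possibly with an extra (harmless) power of $\delta_h/h = o(1)$.

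The main obstacle I anticipate is bookkeeping the geometric perturbation consistently: the two problems \eqref{eq:multinterior}--\eqref{eq:multbc} and \eqref{eq:Nitform} are posed on $\partial\Omega_h$ with the \emph{same} weights $\varrho_h, \bfn_h$, but $\lambda_h \in \Lambda_h$ is only a piecewise polynomial of degree $k-1$ whereas $\bfn_h\cdot\nabla\hat u_h$ is a genuine normal flux, so the identity "$\lambda_h = -\bfn_h\cdot\nabla u_h$" from Green's formula holds only after testing against $\Lambda_h$, not pointwise; one must insert $\pi_h$ carefully and use the assumed approximation property $\|v-\pi_h v\|_{\partial\Omega_h}\lesssim h\|\nabla_\partial v\|_{\partial\Omega_h}$ together with the trace inequality \eqref{eq:trace} to control the commutator. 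A secondary technical point is that $\varrho_h>0$ (assumed here) makes the term $-\|\varrho_h^{1/2}\lambda_h\|^2_{\partial\Omega_h}$ sign-definite the "wrong" way, so the inf-sup stability of Proposition \ref{prop:infsup} — not coercivity — is what must carry the argument, and one needs $C_{\partial\Omega}$ (hence $\delta_h/h$) small enough throughout, which is consistent with \eqref{eq:geomassum-a}. Once these perturbation terms are organised, the estimate \eqref{eq:errorestenergy} follows by the triangle inequality.
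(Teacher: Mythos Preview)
Your overall strategy---compare $(u_h,\lambda_h)$ to the Nitsche solution $\hat u_h$ and control the discrete difference via Proposition~\ref{prop:infsup}---is exactly the paper's approach. Two points in your execution need correction, however.

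First, you cannot apply Proposition~\ref{prop:infsup} to $(y_h,\eta_h)=(\hat u_h-u_h,\,-\bfn_h\!\cdot\!\nabla\hat u_h-\lambda_h)$ as written, because $-\bfn_h\!\cdot\!\nabla\hat u_h\notin\Lambda_h$ in general: the boundary bubble enrichment in $V_h$ pushes the polynomial degree of the normal flux above $k-1$. The paper fixes this by splitting with an auxiliary $\zeta_h\in\Lambda_h$, specifically $\zeta_h=\bfn_h\!\cdot\!\pi_{k-1}\nabla\hat u_h|_{\partial\Omega_h}$, and applies Proposition~\ref{prop:infsup} to $(u_h-\hat u_h,\,\lambda_h-\zeta_h)\in V_h\times\Lambda_h$. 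This generates an additional term $\|h^{1/2}(\zeta_h-\bfn_h\!\cdot\!\nabla\hat u_h)\|_{\partial\Omega_h}$ (term~$II$ in the paper), which is bounded via a trace inequality, the $L^2$-stability and approximation of $\pi_{k-1}$, and the Nitsche error estimate~\eqref{eq:errorNit}. You flag the underlying issue (``insert $\pi_h$ carefully''), but the splitting itself must be changed to land in $\Lambda_h$, not merely the subsequent bookkeeping.

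Second, your identification of the perturbation terms is backwards. When you expand $A(u_h-\hat u_h,\lambda_h-\zeta_h;v_h,\mu_h)$, the Lagrange equations \eqref{eq:multinterior}--\eqref{eq:multbc} eliminate the $(u_h,\lambda_h)$ contributions exactly; what remains is rewritten by substituting the \emph{Nitsche} equation \eqref{eq:Nitform} for $(f,v_h)_{\Omega_h}-(\nabla\hat u_h,\nabla v_h)_{\Omega_h}$. The boundary residual that then appears in the penalty and consistency terms is $\hat u_h+\varrho_h\bfn_h\!\cdot\!\nabla\hat u_h-\tilde g$, i.e.\ the residual of the \emph{Nitsche} solution, not of $u_h$ as you state. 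This is not a cosmetic distinction: that Nitsche residual is already controlled by the third term on the left of~\eqref{eq:errorNit}, whereas the Lagrange residual $u_h-\varrho_h\lambda_h-\tilde g$ is part of what you are trying to bound and cannot be invoked circularly. Once these two points are corrected, the four resulting terms $I$--$IV$ are each bounded by the right-hand side of~\eqref{eq:errorestenergy} times $\tn(v_h,\mu_h)\tn$, and the second inequality of Proposition~\ref{prop:infsup} closes the argument.
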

\begin{proof}
Let $\tilde u_h \in V_h$ denote the solution to (\ref{eq:Nitform}). We
recall from \cite{BrDuTh72, BuHaLa18} that the following error bound
holds 
\begin{align}\nonumber
\tn (u - \tilde u_h,0) \tn&+\|h^{\frac12} \bfn_h \cdot \nabla (u -
  \tilde u_h)\|_{\partial \Omega_h}+ \|h^{-\frac12} (\tilde u_h +
  \varrho \bfn_h \cdot \nabla \tilde u_h - \tilde g)\|_{\partial
                            \Omega_h} \\ \label{eq:errorNit}
&\lesssim 
h^{k} \|u \|_{H^{k+1}(\Omega)}
+ 
h^{-1/2} \varrho_h^{2} 
\sup_{0\leq t \leq \delta_0} \| D^{k+1} u\|_{L^2(\partial \Omega_t)}
\\ \nonumber
&\qquad + 
\textcolor{black}{h^{1/2}} \varrho_h^{l+1} 
\sup_{-\delta_0\leq t < 0} 
\| D_n^{l} (f + \Delta u)\|_{L^2(\partial \Omega_t)}.
\end{align}
Let $i_h u$ denote the nodal interpolant of $u$. We then form the
discrete errors $e_h =  u_h - \tilde u_h$ and $\varsigma_h = \lambda_h
- \zeta_h$ for some $\zeta_h \in \Lambda_h$. Using the triangle inequality and we have
\begin{equation}
\tn (u - u_h, \tilde \lambda- \lambda_h) \tn \leq \tn (u - \tilde u_h,
\tilde \lambda- \zeta_h) \tn + \tn (e_h, \varsigma_h) \tn.
\end{equation}
Since the first term on the left hand side is bounded by standard
interpolation and (\ref{eq:errorNit}). We only need to consider the
second term. By the stability estimate of Proposition
\ref{prop:infsup} we have
\begin{equation}
\tn (e_h, \varsigma_h) \tn \lesssim A(e_h,\varsigma_h; v_h,\mu_h).
\end{equation}
Using the method (\ref{eq:mainproblem}) and the definition of $\tilde u_h$ we 
find that
\begin{align} \label{eq:gal_ortho}
A(e_h,\varsigma_h; v_h,\mu_h) & = (f,v_h)_{\Omega_h} + (\tilde g,
  \mu_h)_{\partial \Omega_h} 
  \\ \nonumber
&\qquad - (\nabla \tilde u_h,\nabla v_h) _{\Omega_h}+(\zeta_h,v_h)_{\partial \Omega_h}
\\ \nonumber
&\qquad -(\tilde u_h, \mu_h)_{\partial \Omega_h} - (\varrho_h\zeta_h, \mu_h) _{\partial \Omega_h}.
\end{align}
The definition of Nitsche's method (\ref{eq:Nitform}) implies the equality
\begin{align}
(f,v_h)_{\Omega_h} -  (\nabla \tilde u_h ,\nabla v_h )_{\Omega_h} 
= {}& (\tilde{g},\bfn_h\cdot\nabla v_h)_{\partial\Omega_h} -(\bfn_h\cdot\nabla \tilde u_h,v_h)_{\partial\Omega_h}  
\\ \nonumber
 & -(\tilde u_h,\bfn_h\cdot\nabla v_h)_{\partial\Omega_h}
-(\varrho_h \bfn_h\cdot\nabla \tilde u_h,\bfn_h\cdot\nabla v_h)_{\partial\Omega_h}
\\ \nonumber
&  +(\gamma (\tilde u_h+\varrho_h\bfn_h\cdot\nabla \tilde
  u_h-\tilde{g}),v_h+\varrho_h\bfn_h\cdot\nabla
  v_h)_{\partial\Omega_h}
\\ 
= {} &  -(\bfn_h\cdot\nabla \tilde u_h,v_h)_{\partial\Omega_h} + (\tilde{g} - \tilde u_h - \varrho_h \bfn_h\cdot\nabla \tilde
       u_h,\bfn_h\cdot\nabla v_h)_{\partial\Omega_h}  
\\ \nonumber
& +(\gamma (\tilde u_h+\varrho_h\bfn_h\cdot\nabla \tilde
  u_h-\tilde{g}),v_h+\varrho_h\bfn_h\cdot\nabla v_h)_{\partial\Omega_h}.
\end{align}
Combining  then (\ref{eq:gal_ortho}) with (\ref{eq:Nitform}) we have
\begin{align}
A(e_h,\varsigma_h; v_h,\mu_h) = &(\tilde g-\tilde u_h-\varrho_h \zeta_h,
  \mu_h)_{\partial \Omega_h} \\ \nonumber
&+(\zeta_h-\bfn_h \cdot
  \nabla \tilde u_h,v_h)_{\partial \Omega_h}\\ \nonumber
& + (\tilde{g} - \tilde u_h - \varrho_h \bfn_h\cdot\nabla \tilde
       u_h,\bfn_h\cdot\nabla v_h)_{\partial\Omega_h}  \\ \nonumber
& +(\gamma (\tilde u_h+\varrho_h\bfn_h\cdot\nabla \tilde
  u_h-\tilde{g}),v_h+\varrho_h\bfn_h\cdot\nabla v_h)_{\partial \Omega_h}\\
= & I+II+III+IV.
\end{align}
We will now bound the terms $I-IV$. 

First note that,
\begin{align}
I+III+IV \leq & (\|h^{-\frac12}(\tilde g-\tilde u_h-\varrho_h
\bfn_h \nabla\cdot \tilde u_h)\|_{\partial \Omega_h}+\|h^{-\frac12}\varrho_h (\zeta_h-
\bfn_h \cdot \nabla \tilde u_h) \|_{\partial \Omega_h})
\\ \nonumber
& \times 
 (\| \nabla
v_h\|_{\Omega_h} +  \|h^{-\frac12}
  v_h\|_{\partial \Omega_h} + \|h^{\frac12} \mu_h\|_{\partial \Omega_h}).
\end{align}
For term $II$ there holds using Cauchy-Schwarz inequality
\begin{align}
II=(\zeta_h-\bfn_h \cdot
  \nabla \tilde u_h,v_h)_{\partial \Omega_h} \lesssim 
  \|h^{\frac12}(\zeta_h - \bfn_h \cdot\nabla \tilde u_h)\|_{\partial \Omega_h} \|h^{-\frac12}
  v_h\|_{\partial \Omega_h}.
\end{align}
Summing up we have using the assumption that
$\|\rho_h\|_{L^\infty(\partial \Omega_h)} \leq O(h)$,
\begin{align}
I+II+III+IV &\leq (\|h^{-\frac12}(\tilde g-\tilde u_h-\varrho_h
\bfn_h \cdot \nabla \tilde u_h)\|_{\partial \Omega_h}
\\ \nonumber
&\qquad +\|h^{\frac12} (\zeta_h-
\bfn_h \cdot \nabla \tilde u_h) \|_{\partial \Omega_h}) \tn (v_h,\mu_h)\tn.
\end{align}

For the term $\|h^{-\frac12}(\tilde g-\tilde u_h-\varrho_h \bfn_h
  \cdot \nabla \tilde u_h)\|_{\partial \Omega_h}$ we may use the bound
  (\ref{eq:errorNit}). It only remains to bound $\|h^{\frac12} (\zeta_h-
\bfn_h \nabla \tilde u_h) \|_{\partial \Omega_h} $. To this end consider
$\pi_{k-1} \nabla \tilde u_h \in [X_h]^d$ and let $\zeta_h = \bfn_h
\cdot \pi_{k-1} \nabla \tilde u_h\vert_{\partial \Omega_h}$. For this
choice we have using a trace inequality
\begin{align}
\|h^{\frac12} (\zeta_h-
\bfn_h \nabla \tilde u_h) \|_{\partial \Omega_h}  = {}&\|h^{\frac12} \bfn_h
\cdot (\pi_{k-1} \nabla \tilde u_h- \nabla \tilde u_h) \|_{\partial
  \Omega_h} 
  \\
 \leq & \|\pi_{k-1} \nabla \tilde u_h- \nabla \tilde u_h\|_{\Omega_h}.
\end{align}
To bound the term in the right hand side we add and subtract
$\nabla u - \pi_{k-1} \nabla u$ and use the triangle inequality and
the stability of the $L^2$-projection $\pi_{k-1}$ to
obtain
\begin{align}
\|\pi_{k-1} \nabla \tilde u_h- \nabla \tilde u_h\|_{\Omega_h} &\leq
\|\pi_{k-1} (\nabla \tilde u_h- \nabla u)\|_{\Omega_h}+
  \|\pi_{k-1} \nabla u- \nabla u\|_{\Omega_h}+\|\nabla u- \nabla
  \tilde u_h\|_{\Omega_h} 
  \\
& \leq \|\pi_{k-1} \nabla u- \nabla u\|_{\Omega_h}+2 \|\nabla u- \nabla
  \tilde u_h\|_{\Omega_h}.
\end{align}
For the first term in the right hand side we have the approximation
bound
\begin{equation}
\|\pi_{k-1} \nabla u- \nabla u\|_{\Omega_h} \lesssim h^k \|D^{k+1} u\|_{\Omega_h}.
\end{equation}
The second term is bounded by (\ref{eq:errorNit}).
We conclude by applying the second inequality of Proposition
\ref{prop:infsup}.
\end{proof}
\section{Remarks on cut finite element methods}
In the context of cut finite element methods the discontinuous
multiplier spaces used above can no longer be expected to be
stable. It is possible to stabilise the multiplier using
Barbosa-Hughes stabilisation. However, fluctuation based multipliers
are unlikely to be suitable in this context since the weak
consistency of the fluctuations of the multiplier between elements
depends on the geometry approximation through the interface normal.
Since the method is of interest when the geometry approximation is of
relatively low order, this limits the possibility to use fluctuation
based stabilisation.

For closed smooth boundaries, one may prove inf-sup stability and optimal
convergence, without stabilisation, when using continuous approximation of polynomial order
less than or equal to $2$, for both the
bulk variable and the multiplier provided $\rho_h = O(h^2)$. The approximation order of the interface normal,
which is $O(h)$ prohibits higher order convergence if the interface
approximation is piecewise affine. For instance, piecewise cubic
continuous approximation will not necessarily achieve higher order
convergence that the piecewise quadratic approximation.

\section{Numerical examples}\label{sec:numex}

We show examples of higher order triangular elements with linearly interpolated boundary and low order rectangular elements with staircase boundary, using discontinuous multiplier spaces.
In all examples we define the meshsize $h=1/\sqrt{\text{NNO}}$, where NNO corresponds to the number of nodes of the lowest order FEM on the mesh in question (bilinear or affine). 

\subsection{Triangular elements}

We first consider the case of affine triangulations of a ring $1/4\leq r\leq 3/4$, $r=\sqrt{x^2+y^2}$. We use the manufactured solution $u=(r-1/4)(3/4-r)$ and compute the corresponding right--hand side analytically. An elevation of the a typical discrete solution is given in Fig.\  \ref{fig:trisol}.

We use continuous piecewise $P^k$ polynomials, $k=2,3$ for the approximation of $u$, and for the approximation of $\lambda$ we use 
piecewise $P^{k-1}$ polynomials, discontinous on each element edge on $\Gamma_h$. To ensure {\em inf--sup}\, stability, we add hierarchical $P^{k+1}$ bubbles
on each edge in the approximation of $u$.

\paragraph{Second order elements.}
In Fig. \ \ref{fig:errtri} we show the convergence in $L_2(\Omega_h)$ and $H^1(\Omega_h)$ with and without boundary modification.
In Fig. \ \ref{fig:errlam} we show the error in multiplier computed as $\| (-\bfn\cdot\nabla u)\vert_{\partial\Omega_h} - \lambda_h\|_{\partial\Omega_h}$. Optimal order convergence is observed for the modified method, convergence $O(h^3)$ in $L_2(\Omega_h)$ and $O(h^2)$ 
in $H^1(\Omega_h)$; the multiplier error is approximately $O(h^2)$.

\paragraph{Third order elements.}
Next we use continuous piecewise third order polynomials for the approximation of $u$, and for the approximation of $\lambda$ we use piecewise quadratic polynomials, discontinous on each element edge on $\Gamma_h$.  In Fig.\ \ref{fig:errtri2} we show the convergence in $L_2(\Omega_h)$ and $H^1(\Omega_h)$ with and without boundary modification.
In Fig.\  \ref{fig:errlam2} we show the error in multiplier computed as above. Optimal order convergence is again observed for the modified method, convergence $O(h^4)$ in $L_2(\Omega_h)$ and $O(h^3)$ in $H^1(\Omega_h)$; the multiplier error is approximately $O(h^3)$. Note that no improvement over $P^2$ approximations can be seen in the unmodified method due to the geometry error being dominant.

\paragraph{An unstable pairing of spaces.}
We finally make the observation that our modification has a stabilising influence on the approximation. We try continuous $P^2$ approximations of $u$ and discontinuous $P^2$ approximations of $\lambda$. In this case we get no convergence without the modification due to the violation of the {\em inf--sup} condition,
whereas with modification we obtain the optimal convergence pattern in $u$ and a stable multiplier convergence given in Fig \ref{fig:conunstab}.

\subsection{Rectangular elements}

This example shows that it is possible to achieve optimal convergence even on a staircase boundary. We use a continuous piecewise $Q_1$ approximation on the (affine) rectangles, again enhanced for {\em inf--sup}, now by hierarchical $P^2$ bubble function on the boundary edges, together with edgewise constant multipliers on $\Gamma_h$. We use the manufactured solution $u=\sin(x^3)\cos(8y^3)$ on the domain inside the ellipse $x^2/4+y^2 = 1$. Our computational grids consist of elements completely inside this ellipse; a typical coarse grid is shown if Fig.\ \ref{fig:coarse} where we note the staircase boundary.
In Fig.\ \ref{fig:elevq} we show elevations of the numerical solutions on a finer grid without and with boundary correction. In Fig.\ \ref{fig:errquad} we show the errors of the unmodified and modified methods. Again we observe optimal order convergence for the modified method, $O(h^2)$ in $L_2(\Omega_h)$ and $O(h)$ in $H^1(\Omega_h)$.

\section{Concluding remarks}

We have introduced a symmetric modification of the Lagrange multiplier approach to satisfying Dirichlet boundary conditions for Poisson's equation.
This novel approach allows for affine approximations of the boundary, and thus affine elements, up to polynomial approximation order 3 without loss of
convergence rate as compared to higher order boundary fitted meshes. The modification is easy to implement and only requires that the distance to the exact boundary in the direction of the discrete normal can be easily computed. In fact, the modification stabilises the multiplier method so that unstable pairs of spaces can be used, as long as there is a uniform distance to the boundary.

\paragraph{Acknowledgement.} This research was supported in part by
EPSRC, UK, Grant No. \newline EP/P01576X/1, the Swedish Foundation for Strategic Research Grant No.\ AM13-0029, the Swedish Research Council Grants No.
2013-4708,  2017-03911, 2018-05262, and Swedish strategic research programme eSSENCE.

\bibliographystyle{abbrv}

\newpage

\begin{figure}[ht]
\begin{center}\includegraphics[scale=0.35]{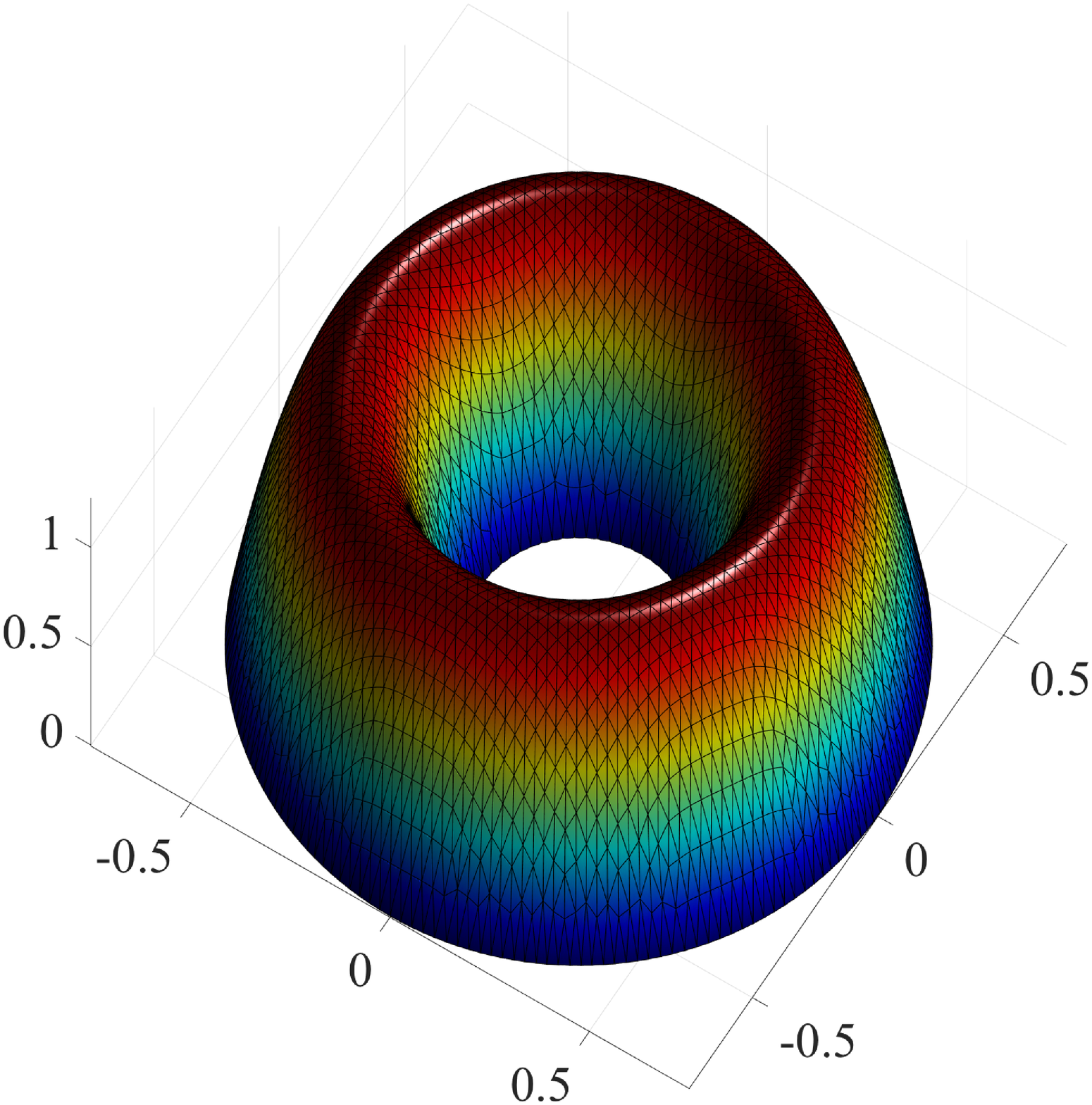}
\end{center}
\caption{Elevation of the discrete solution on triangles.} \label{fig:trisol}
\end{figure}
\begin{figure}[ht]
\begin{center}\includegraphics[scale=0.3]{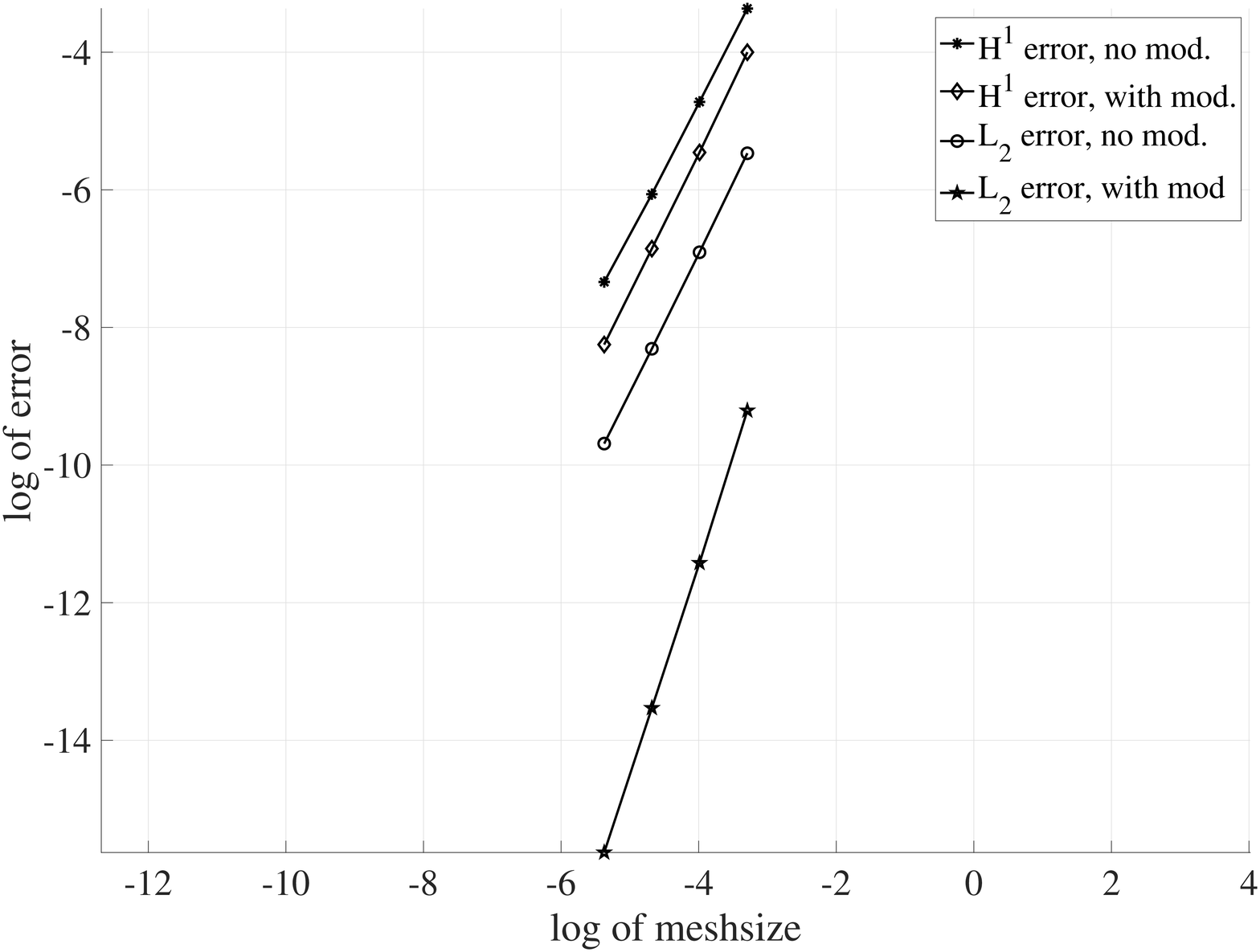}
\end{center}
\caption{Errors with and without boundary modification, $P^2$ case.} \label{fig:errtri}
\end{figure}
\begin{figure}[ht]
\begin{center}\includegraphics[scale=0.3]{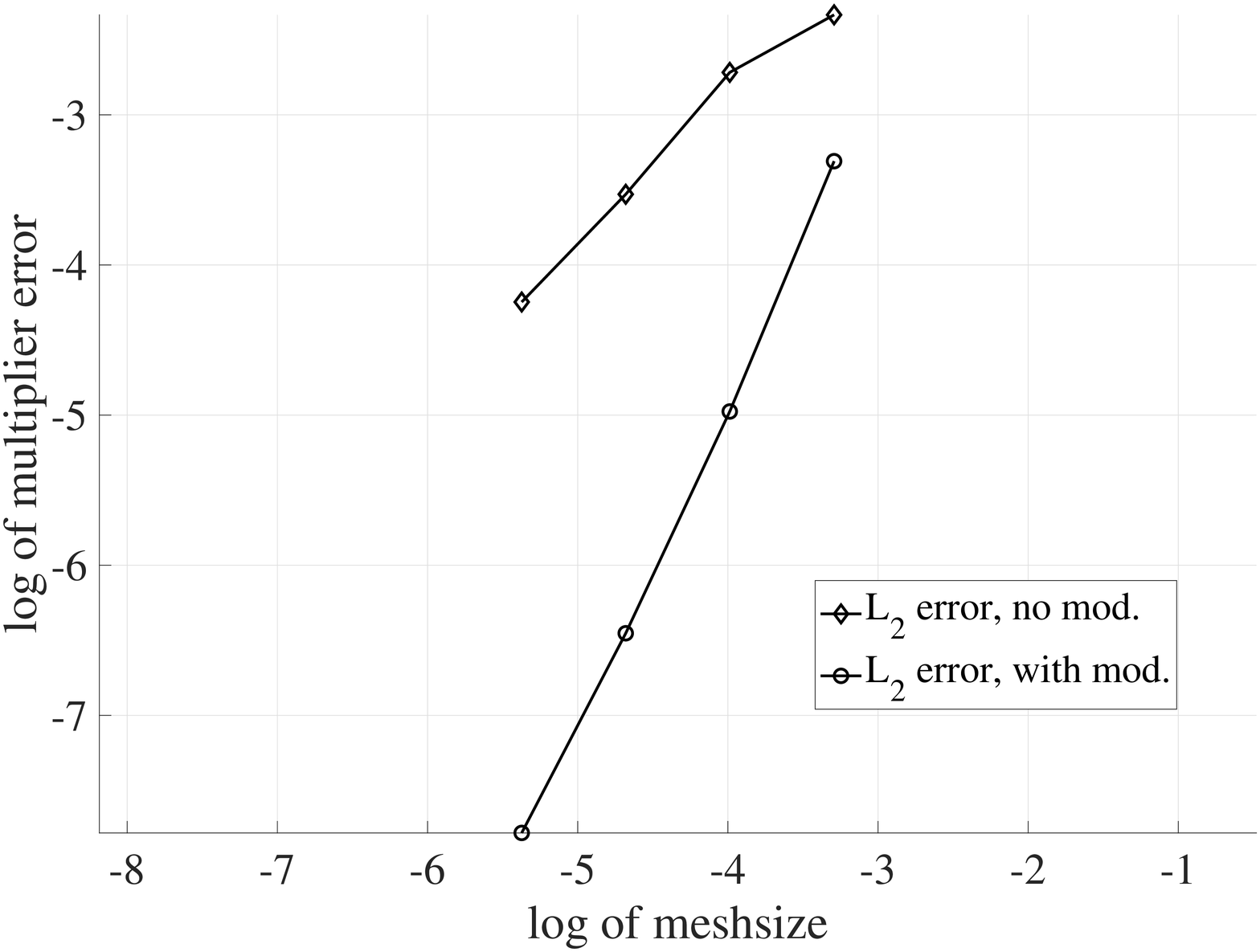}
\end{center}
\caption{Errors in the multiplier with and without boundary modification, $P^2$ case.} \label{fig:errlam}
\end{figure}
\begin{figure}[ht]
\begin{center}\includegraphics[scale=0.3]{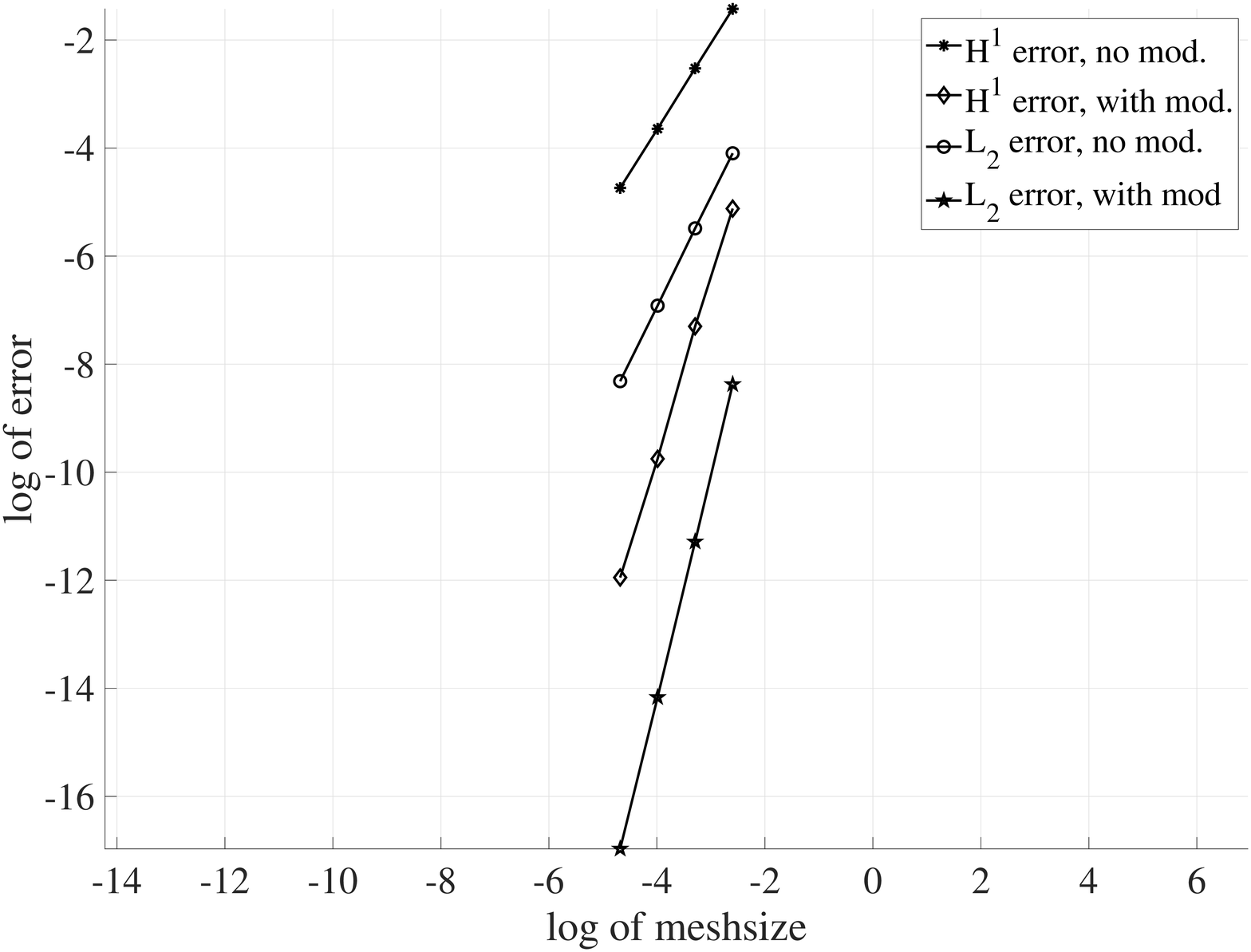}
\end{center}
\caption{Errors with and without boundary modification, $P^3$ case.} \label{fig:errtri2}
\end{figure}
\begin{figure}[ht]
\begin{center}\includegraphics[scale=0.3]{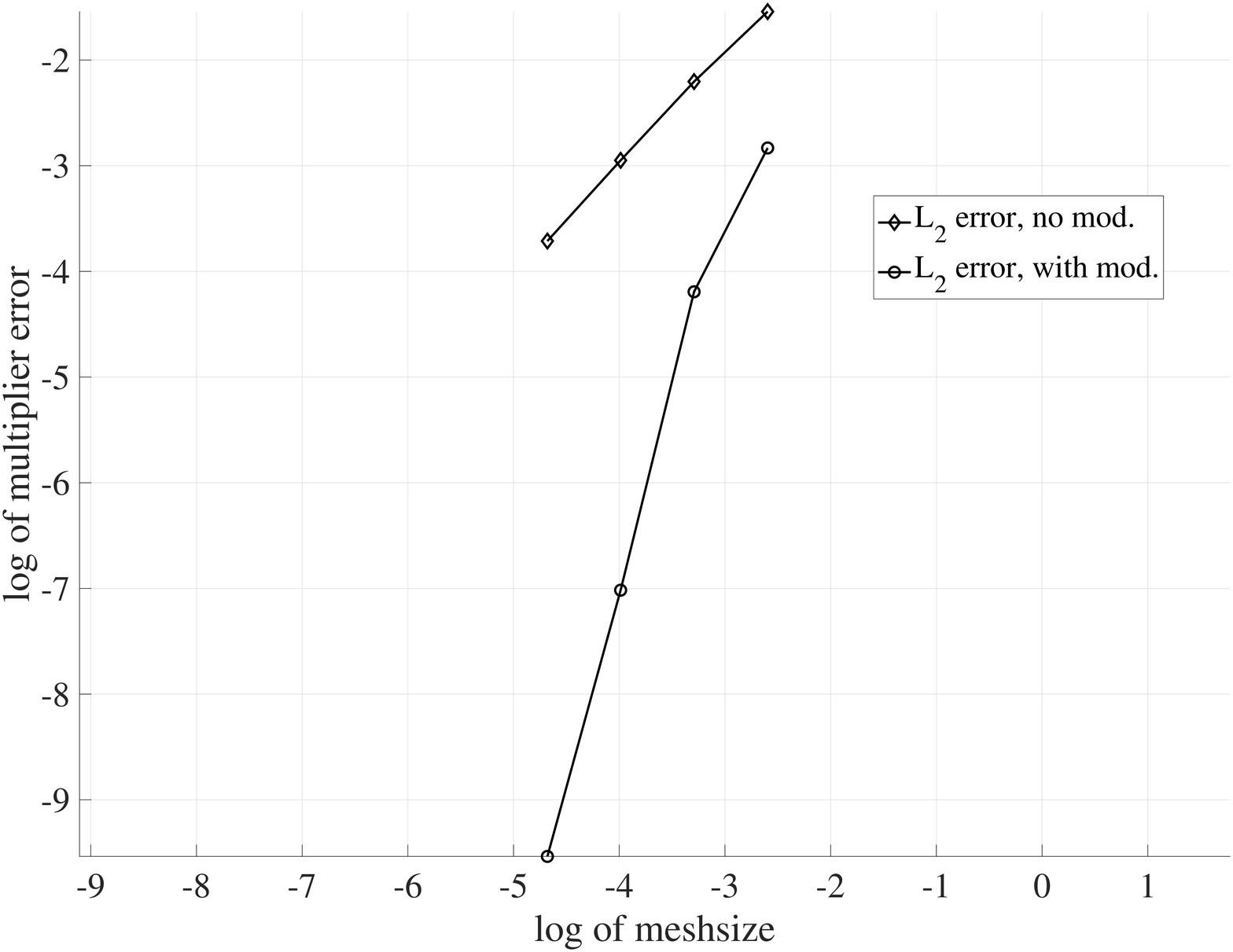}
\end{center}
\caption{Errors in the multiplier with and without boundary modification, $P^3$ case.} \label{fig:errlam2}
\end{figure}
\begin{figure}[ht]
\begin{center}\includegraphics[scale=0.3]{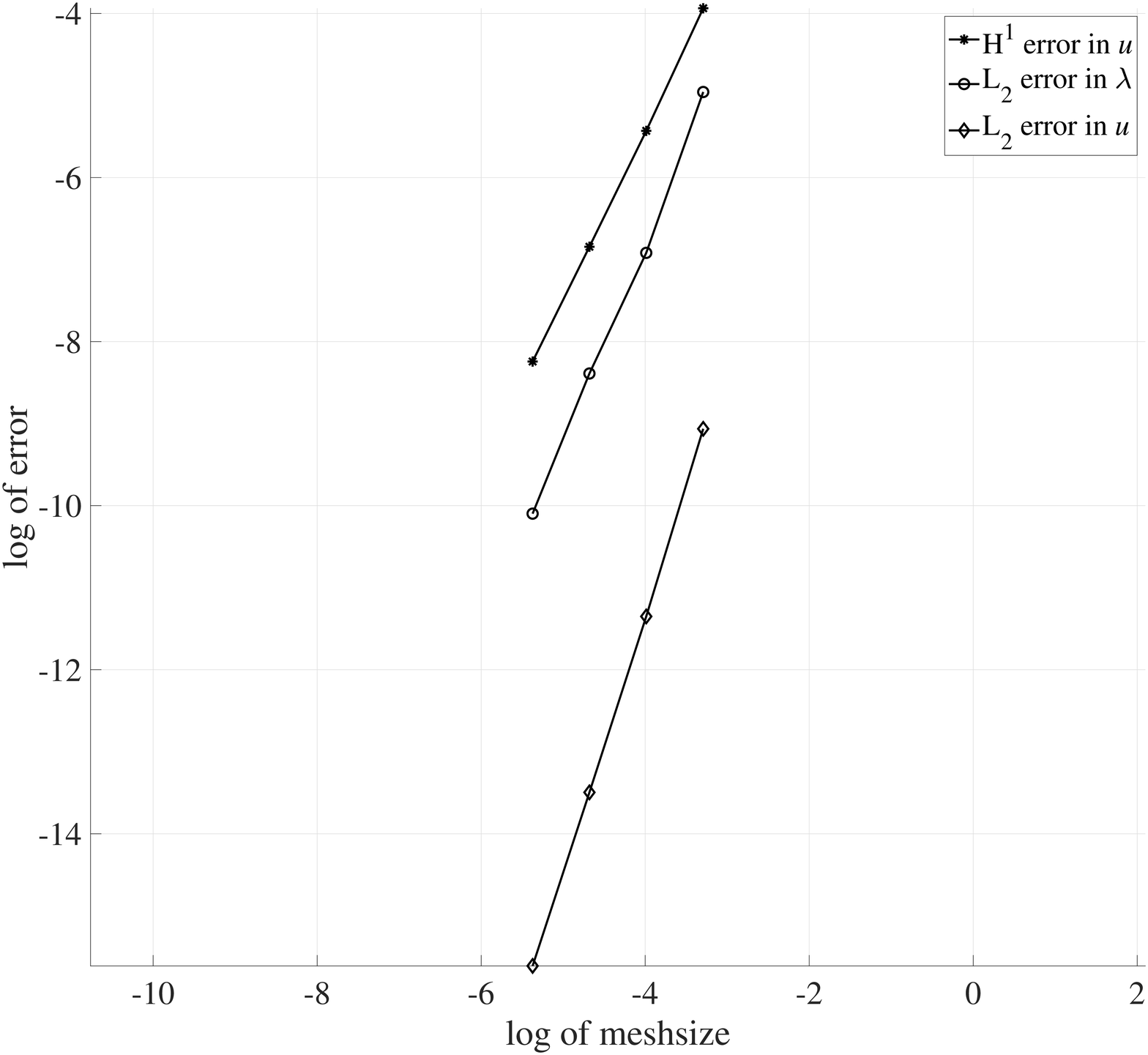}
\end{center}
\caption{Error plots for the unstable triangular element example.} \label{fig:conunstab}
\end{figure}
\begin{figure}[ht]
\begin{center}\includegraphics[scale=0.3]{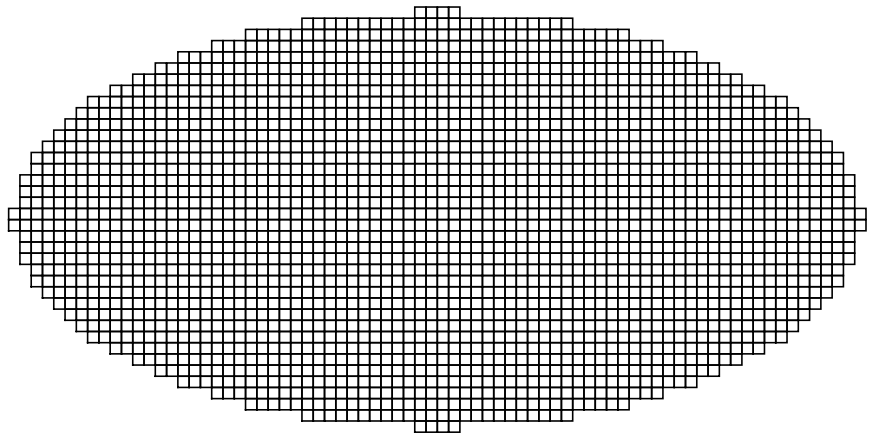}
\end{center}
\caption{A coarse mesh inside the elliptical domain.} \label{fig:coarse}
\end{figure}
\begin{figure}[ht]
\begin{center}\includegraphics[scale=0.24]{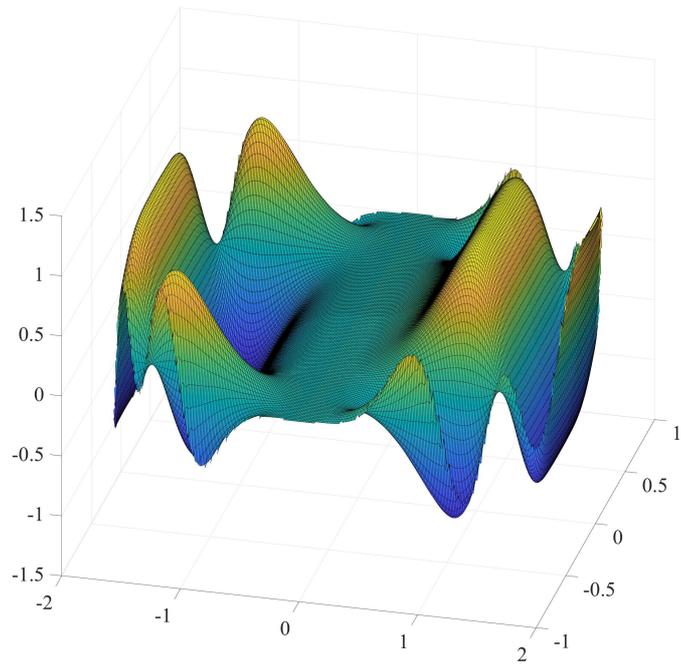}
\end{center}
\begin{center}\includegraphics[scale=0.27]{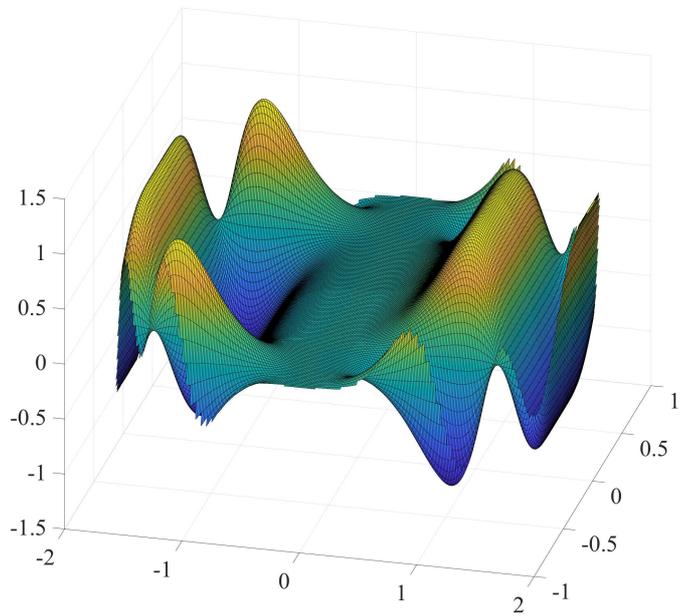}
\end{center}
\caption{Elevation of the discrete solution on rectangles for the unmodified (top) and for the modified (bottom) schemes.} \label{fig:elevq}
\end{figure}
\begin{figure}[ht]
\begin{center}\includegraphics[scale=0.3]{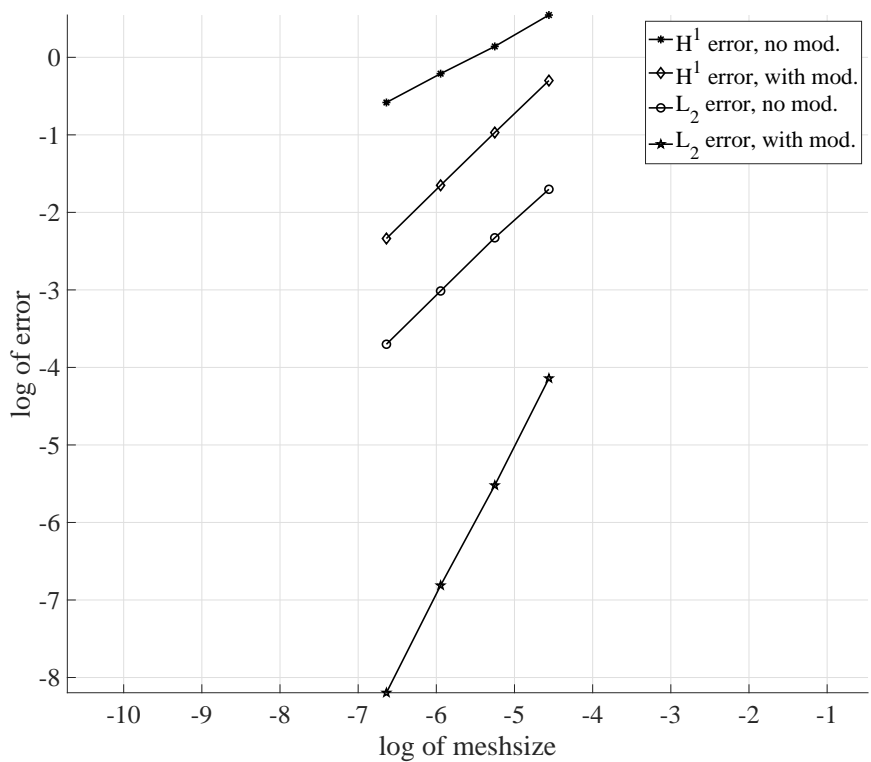}
\end{center}
\caption{Error plots for the rectangular element example.} \label{fig:errquad}
\end{figure}

\end{document}